\theoremstyle{plain}
\newtheorem{theorem}{Theorem}[section]
\newtheorem{proposition}[theorem]{Proposition}
\newtheorem{remark}[theorem]{Remark}
\theoremstyle{definition}
\newtheorem{problem}[theorem]{Problem}
\newcommand{\IN}{\ensuremath{\mathbb{N}}}
\newcommand{\emptyword}{\ensuremath{\varepsilon}}
\newcommand{\vect}[1]{\ensuremath{\mathbf{#1}}}
\DeclareMathOperator{\pr}{pr}
\DeclareMathOperator{\id}{id}
\DeclareMathOperator{\GF}{GF}
\begin{document}
\title{On associative operations on commutative integral domains}

\author{Erkko Lehtonen}

\author{Florian Starke}

\address
   {Technische Universit\"at Dresden \\
    Institut f\"ur Algebra \\
    01062 Dresden \\
    Germany}

\maketitle

\begin{abstract}
We describe the associative multilinear polynomial functions over commutative integral domains. This extends Marichal and Mathonet's result on infinite integral domains and provides a new proof of Andres's classification of two-element $n$\hyp{}semigroups.

\end{abstract}


\section{Introduction}

The classical notions of associativity and semigroup are generalized by $n$\hyp{}ary associativity and $n$\hyp{}semigroup.
Marichal and Mathonet described associative polynomial functions over infinite commutative integral domains \cite{MarMat-2011}.
We slightly modify this result to obtain a description of the associative
multilinear polynomial functions over arbitrary commutative integral domains.
As a special case, this gives a classification of the associative operations on a two\hyp{}element set (Boolean functions),
which was first established by Andres~\cite{Andres}.
We provide another, elementary proof of the result for Boolean functions, which is a streamlined version of the proof presented in \cite{Andres}.
Furthermore, we describe which $n$\hyp{}semigroups on a two\hyp{}element set are not derivable from any semigroup of smaller arity.


\section{Preliminaries}
\label{sec:preliminaries}

Throughout this paper, we denote the set of nonnegative integers by $\IN$.
Let $A$ be an arbitrary set.
An \emph{operation} on $A$ is a map $f \colon A^n \to A$ for some number $n \in \IN$, called the \emph{arity} of $f$.
An operation $f \colon A^n \to A$ is \emph{associative,} if for all $i, j \in \{0, \dots, n-1\}$, the equality
\begin{multline*}
f ( a_1, \dots, a_i, f ( a_{i+1}, \dots, a_{i+n} ), a_{i+n+1}, \dots,
a_{2n-1} ) \\
= f ( a_1, \dots, a_j, f ( a_{j+1}, \dots, a_{j+n} ), a_{j+n+1}, \dots,
a_{2n-1} )
\end{multline*}
holds for all $a_1, \dots, a_{2n-1} \in A$.
For $n = 2$, this condition is exactly the classical associative law.
Note that every unary operation is associative.
An algebra $(A; f)$ with a single $n$\hyp{}ary associative operation $f$ is called an \emph{$n$\hyp{}semigroup} or an \emph{$n$\hyp{}ary semigroup.}
Thus $2$\hyp{}semigroups are just the classical semigroups.

Given an $n$\hyp{}ary operation $f \colon A^n \to A$ and $\ell \in \IN$, we define the operation $f_\ell$ of arity $N(\ell) \coloneqq \ell (n - 1) + 1$ by the following recursion:
$f_0 \coloneqq \id_A$,
and for $\ell \geq 0$, define $f_{\ell+1} \colon A^{N(\ell+1)} \to A$ by the rule
\[
f_{\ell+1}(a_1, \dots, a_{N(\ell+1)}) \coloneqq
f_\ell(f(a_1, \dots, a_n), a_{n+1}, \dots, a_{N(\ell+1)}),
\]
for all $a_1, \dots, a_{N(\ell+1)} \in A$.
The operations $f_\ell$ are said to be \emph{derived} from $f$.
Note that $f_1 = f$.
In order to emphasize the arity of a derived operation, we will also write $f^{(N(\ell))}_\ell$ or simply $f^{(N(\ell))}$ for $f_\ell$.
It is easy to verify that if $f \colon A^n \to A$ is associative, then every operation derived from $f$ is associative.
Not every $n$\hyp{}ary associative operation arises in this way.
We say that an associative operation $f \colon A^n \to A$ is \emph{primitive,} if $f$ is not derivable from any associative operation $g \colon A^m \to A$ with $m < n$.


\section{Associative multilinear polynomial functions on integral domains}
\label{sec:related}

We would like to modify the following result so as to make it applicable for finite domains.

\begin{theorem}[{Marichal, Mathonet~\cite[Main Theorem]{MarMat-2011}}]
\label{thm:MarMat}
Let $R$ be an infinite commutative integral domain with identity and $n \geq 2$.
A polynomial function $p \colon R^n \to R$ is associative if and only if it is one of the following:
\begin{enumerate}[label={\upshape(\roman*)}]
\item $p(\mathbf{x}) = c$, where $c \in R$,
\item $p(\mathbf{x}) = x_1$,
\item $p(\mathbf{x}) = x_n$,
\item $p(\mathbf{x}) = c + \sum_{i=1}^n x_i$, where $c \in R$,
\item\label{thm:MarMat:v} $p(\mathbf{x}) = \sum_{i=1}^n \omega^{i-1} x_i$ \textup{(}if $n \geq 3$\textup{)}, where $\omega \in R \setminus \{1\}$ satisfies $\omega^{n-1} = 1$,
\item $p(\mathbf{x}) = -b + a \prod_{i=1}^n (x_i + b)$, where $a \in R \setminus \{0\}$ and $b$ is an element of the field of fractions of $R$ such that $ab^n - b \in R$ and $ab^k \in R$ for every $k \in \{1, \dots, n-1\}$.
\end{enumerate}
\end{theorem}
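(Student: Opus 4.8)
The plan is to prove both implications, working over the field of fractions $K = \Frac(R)$. Since $R$ is infinite, a polynomial function over $R$ is determined by its coefficients, and the functional associativity law is equivalent to the corresponding identity in $K[x_1, \dots, x_{2n-1}]$; this lets me reason with polynomials over a field and impose integrality only at the very end. For the easy ``if'' direction I would substitute each of the six candidates into the associativity identity. The only noteworthy checks are \ref{thm:MarMat:v}, where the nesting produces coefficients $\omega^{(i-1)+(j-1)}$ and the hypothesis $\omega^{n-1} = 1$ is exactly what makes the two sides agree, and (vi), where rewriting $p(\vect{x}) = -b + a\prod_i(x_i + b)$ exhibits $p$ as the conjugate $\phi^{-1} \circ m \circ (\phi \times \dots \times \phi)$ of the pure product $m(\vect{y}) = a\prod_i y_i$ by the shift $\phi(x) = x + b$; as conjugation by a bijection preserves positional associativity and $m$ is manifestly associative, so is $p$.

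The core is the ``only if'' direction, whose first step is to prove that an associative $p$ is multilinear. Writing $d_k = \deg_{x_k} p$ and comparing the associativity identity at inner positions $i = 0$ and $i = n-1$, the degree in $a_1$ and in $a_{2n-1}$ gives $d_1^2 = d_1$ and $d_n^2 = d_n$, while the degree in a middle variable $a_k$ gives $d_1 d_k = d_k$. Comparing instead positions $i = 0$ and $i = 1$ and reading the degree in $a_k$ for $2 \le k \le n$ yields the recursion $d_1 d_k = d_2 d_{k-1}$. If $d_1 = 1$ this forces $d_k = d_2^{\,k-1}$, so $d_n \le 1$ already implies $d_2 \le 1$ and hence $d_k \le 1$ for all $k$; if $d_1 = 0$, the relation $d_1 d_k = d_k$ kills every middle degree. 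Either way $p$ is multilinear, and I may write $p(\vect{x}) = \sum_{S \subseteq \{1, \dots, n\}} c_S \prod_{i \in S} x_i$.

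It now suffices to determine the $c_S$, and the leading coefficient $a \coloneqq c_{\{1, \dots, n\}}$ dictates the two cases. When $a = 0$ I would show that $p$ is in fact affine (no $c_S$ with $2 \le \lvert S \rvert$ survives) and then solve the linear system that associativity imposes on $c_\emptyset$ and the $c_{\{k\}}$: boundary relations give $c_{\{1\}}, c_{\{n\}} \in \{0, 1\}$, and matching the coefficient of the middle argument forces the nonzero $c_{\{k\}}$ into a geometric progression $c_{\{k\}} = \omega^{k-1}$ with $\omega^{n-1} = 1$, the constant term being killed unless $\omega = 1$; sorting out the degenerate possibilities recovers exactly forms (i)--(v). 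When $a \ne 0$ I would normalize by an affine shift: conjugating $p$ by $\phi(x) = x + \beta$ produces an associative multilinear $q$ with the same leading coefficient, and the aim is to choose $\beta$ so that $q = a\prod_i x_i$, which unwinds to $p(\vect{x}) = -\beta + a\prod_i(x_i + \beta)$, i.e.\ form (vi) with $b = \beta$. The correct $\beta$ is minus a fixed point of the diagonal $t \mapsto p(t, \dots, t)$; such a point exists over $\bar{K}$, but comparing the degree-$(n-1)$ coefficients of $p$ and of $a\prod_i(x_i+\beta)$ shows $\beta = c_{\{1,\dots,n\}\setminus\{n\}}/a \in K$, so $b \in \Frac(R)$ as claimed.

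Finally I would translate ``$p$ is a genuine polynomial function $R^n \to R$'' into the stated conditions: expanding form (vi), the coefficient of a degree-$k$ monomial is $ab^{\,n-k}$ and the constant term is $ab^n - b$, so requiring all coefficients to lie in $R$ is exactly $ab^k \in R$ for $1 \le k \le n-1$ together with $ab^n - b \in R$. I expect the main obstacle to be the case $a \ne 0$: proving that the shift really annihilates \emph{all} lower-order terms (equivalently, that associativity together with a single normalization forces the pure-product form), and, dually, the exclusion of ``partial product'' terms in the case $a = 0$. The degree bookkeeping and the affine subcase are routine by comparison.
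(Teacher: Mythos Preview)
The paper does not give its own proof of this theorem: it is quoted verbatim from Marichal and Mathonet, and the surrounding text only summarizes the architecture of their argument---first reduce to multilinear polynomials (their Proposition~2), then classify. Your plan follows exactly that architecture, so at the level of strategy there is nothing to compare; you are reconstructing the cited proof, not offering an alternative to something in the present paper.

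As a reconstruction, your outline is sound. The degree-counting argument for multilinearity is correct over an integral domain (leading terms of compositions do not cancel, so $\deg_{a_1}$ of the $i=0$ side really is $d_1^2$, etc.), and the case split on the top coefficient $a = c_{\{1,\dots,n\}}$ is the right organizing principle. You have also correctly identified where the work lies: in the $a=0$ branch one must actually eliminate all mixed monomials $c_S$ with $2 \le |S| < n$ before the affine analysis can start, and in the $a \ne 0$ branch one must show that the single shift by $b$ kills \emph{every} lower term, not just the next-to-top one you used to pin down $b \in K$. Both of these require comparing several specific coefficients in the associativity identity and are more than bookkeeping; your proposal flags them but does not carry them out, so what you have is an accurate roadmap rather than a proof. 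One small point: your heuristic ``$\beta$ is minus a fixed point of the diagonal'' locates a candidate $b$ over $\bar K$, but it does not by itself force uniqueness or membership in $K$; the coefficient comparison you mention afterwards is what actually does that job, and it should be promoted from afterthought to the definition of $b$.
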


Marichal and Mathonet's proof of Theorem~\ref{thm:MarMat} starts with the observation that polynomials and polynomial functions over $R$ are in one\hyp{}to\hyp{}one correspondence.
Then it is shown in \cite[Proposition~2]{MarMat-2011} that for any associative polynomial function $p \colon R^n \to R$, the polynomial $p$ must be multilinear, i.e., no variable occurs with an exponent higher than $1$.
The remainder of the proof only relies on this multilinearity.
Thus, restricting ourselves to multilinear polynomials to begin with, we can also allow finite domains, and we are lead to the following result.

\begin{theorem}
\label{thm:gen-MarMat}
Let $R$ be a commutative integral domain with identity and $n \geq 2$.
A multilinear polynomial function $p \colon R^n \to R$ is associative if and only if it is of one of the forms prescribed in Theorem~\ref{thm:MarMat}.
\end{theorem}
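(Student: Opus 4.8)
The plan is to reduce associativity as a statement about \emph{functions} to associativity as an \emph{identity of polynomials}, after which the part of Marichal and Mathonet's argument that uses only multilinearity applies without change. The single place where their proof relies on $R$ being infinite is the identification of polynomials with polynomial functions; the first step is to restore this identification for multilinear polynomials over an arbitrary integral domain. Concretely, I would prove the following uniqueness lemma: \emph{over a commutative integral domain $R$ with identity, a multilinear polynomial is determined by the function it induces}, equivalently, a multilinear polynomial inducing the zero function on $R^n$ is the zero polynomial. The proof is a short induction on the number of variables: writing a multilinear $p$ as $p = q + x_n r$ with $q$ and $r$ multilinear in $x_1, \dots, x_{n-1}$, I would fix the first $n - 1$ arguments and evaluate the resulting affine function of $x_n$ at $x_n = 0$ and at $x_n = 1$ (two distinct points, since $1 \neq 0$), obtaining that $q$ and $r$ vanish as functions on $R^{n-1}$; the inductive hypothesis then yields $q = r = 0$ as polynomials. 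This is the only ingredient needed to compensate for the possible finiteness of $R$.

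For the sufficiency (``if'') direction, I would check that for each of the six forms the associativity equation holds as a genuine polynomial identity, using in case~\ref{thm:MarMat:v} the relation $\omega^{n-1} = 1$ and in case~(vi) the integrality hypotheses on $a$ and $b$. A polynomial identity is preserved by every ring homomorphism, so it remains valid over $R$, and evaluating shows that the induced function is associative. This direction makes no reference to the cardinality of $R$ and is therefore identical to the infinite case.

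For the necessity (``only if'') direction, let $p \colon R^n \to R$ be an associative multilinear polynomial function. The key observation is that both sides of the associativity equation are again multilinear polynomials in the $2n - 1$ variables $a_1, \dots, a_{2n-1}$: the inner copy of $p$ is multilinear in its block of arguments, these arguments are disjoint from the remaining variables, and substituting the inner copy into one argument slot of the outer multilinear $p$ keeps every variable at degree at most $1$. Hence each side induces a multilinear polynomial function of $2n - 1$ variables, and by the uniqueness lemma the functional associativity of $p$ is \emph{equivalent} to the equality of these two polynomials as an identity. This is exactly the situation that Marichal and Mathonet reach in \cite[Proposition~2]{MarMat-2011} once they have established multilinearity; from that point their derivation works entirely with the coefficients of multilinear polynomials and compares them through polynomial identities, without ever invoking infinitely many elements. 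I would carry that coefficient analysis over verbatim to conclude that $p$ has one of the forms (i)--(vi).

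The hard part will be justifying the last sentence rigorously rather than by appeal. I would therefore go through Marichal and Mathonet's remaining argument step by step, checking that each place where they pass from a vanishing polynomial function to a vanishing polynomial concerns a multilinear polynomial, so that the uniqueness lemma legitimately replaces their use of the polynomial/function correspondence over infinite domains. Establishing the uniqueness lemma and performing this verification are the only real work; the forms themselves, and all the algebra producing them, are inherited unchanged from Theorem~\ref{thm:MarMat}.
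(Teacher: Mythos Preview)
Your proposal is correct and follows the same route as the paper: reduce to Marichal--Mathonet's argument by observing that everything after their Proposition~2 uses only multilinearity. The paper's justification is a single paragraph that simply asserts ``the remainder of the proof only relies on this multilinearity''; you make explicit the one point the paper leaves tacit, namely that the passage from functional associativity to a polynomial identity still has to be justified, and you supply the uniqueness lemma (multilinear polynomials over any integral domain with $1\neq 0$ are determined by their values, indeed already by their values on $\{0,1\}^n$) together with the observation that both sides of the associativity equation are multilinear in the $2n-1$ variables. That is exactly the missing glue, and it is the only place where care is needed; otherwise your plan and the paper's coincide.
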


Since every Boolean function is a multilinear polynomial function over $\GF(2)$,
a description of associative Boolean functions follows immediately from Theorem~\ref{thm:gen-MarMat} (note that item \ref{thm:MarMat:v} is void in this case).
On the other hand, the theorem fails to capture all associative functions over finite fields with at least three elements.
It is easy to provide examples of $n$\hyp{}ary associative operations that are not of any of the forms listed in Theorem~\ref{thm:MarMat}, such as
$n$\hyp{}semigroups derived from rectangular bands,
or operations of the form $(x_1, \dots, x_n) \mapsto \varphi(x_1)$, where $\varphi \colon A \to A$ is a nonconstant idempotent map distinct from $\id_A$.
This leads to an intriguing open problem.

\begin{problem}
Describe the associative operations on a finite set with at least three elements.
\end{problem}


\section{Elementary proof of the description of two\hyp{}element $n$\hyp{}semigroups}

It is well known that there are eight semigroups on the two\hyp{}element set $\{0,1\}$.
They are precisely the algebras with one of the following binary operations:
constant operations $c_0$, $c_1$,
projections $\pr^{(2)}_1$, $\pr^{(2)}_2$,
semigroup operations $\vee$, $\wedge$,
and group operations $+$, $\boxplus$.
These operations are defined in Table~\ref{table:semigroups}.

\begin{table}
\begin{center}
\begin{tabular}{rrrr}
\begin{tabular}[t]{r|cc}
$c_0$ & $0$ & $1$ \\
\hline
$0$ & $0$ & $0$ \\
$1$ & $0$ & $0$
\end{tabular}
&
\begin{tabular}[t]{r|cc}
$c_1$ & $0$ & $1$ \\
\hline
$0$ & $1$ & $1$ \\
$1$ & $1$ & $1$
\end{tabular}
&
\begin{tabular}[t]{r|cc}
$\pr^{(2)}_1$ & $0$ & $1$ \\
\hline
$0$ & $0$ & $0$ \\
$1$ & $1$ & $1$
\end{tabular}
&
\begin{tabular}[t]{r|cc}
$\pr^{(2)}_2$ & $0$ & $1$ \\
\hline
$0$ & $0$ & $1$ \\
$1$ & $0$ & $1$
\end{tabular}
\\
\\
\begin{tabular}[t]{r|cc}
$\vee$ & $0$ & $1$ \\
\hline
$0$ & $0$ & $1$ \\
$1$ & $1$ & $1$
\end{tabular}
&
\begin{tabular}[t]{r|cc}
$\wedge$ & $0$ & $1$ \\
\hline
$0$ & $0$ & $0$ \\
$1$ & $0$ & $1$
\end{tabular}
&
\begin{tabular}[t]{r|cc}
$+$ & $0$ & $1$ \\
\hline
$0$ & $0$ & $1$ \\
$1$ & $1$ & $0$
\end{tabular}
&
\begin{tabular}[t]{r|cc}
$\boxplus$ & $0$ & $1$ \\
\hline
$0$ & $1$ & $0$ \\
$1$ & $0$ & $1$
\end{tabular}
\end{tabular}
\end{center}

\bigskip
\caption{The semigroup operations on $\{0,1\}$.}
\label{table:semigroups}
\end{table}

For notational simplicity, in what follows, we view tuples over $\{0,1\}$ as words in the free monoid with two generators $0$ and $1$, and we will concatenate words by writing them one after the other.
We use the shorthand $a^n$ for a word comprising $n$ copies of $a$.
In particular, $a^0$ equals the empty word $\emptyword$, $a^1 = a$, and $a^2 = aa$.
Moreover, we denote an $n$\hyp{}ary operation by brackets, i.e., $(\cdot) \colon \{0,1\}^n \to \{0,1\}$, $a_1 a_2 \dots a_n \mapsto (a_1 a_2 \dots a_n)$.

\begin{theorem}[{Andres~\cite[Theorem~3.1]{Andres}}]
\label{thm:main}
For $n \geq 2$, an $n$\hyp{}ary operation on $\{0,1\}$ is associative if and only if it is one of the following:
$c^{(n)}_0$, $c^{(n)}_1$, $\pr^{(n)}_1$, $\pr^{(n)}_n$, $\vee^{(n)}$, $\wedge^{(n)}$, $+^{(n)}$, $\overline{+^{(n)}}$,
where $\overline{+^{(n)}}(a_1, \dots, a_n) \coloneqq +^{(n)}(a_1, \dots, a_n) + 1$.
\end{theorem}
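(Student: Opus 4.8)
I would first check that each of the eight operations is associative. The constant operations $c^{(n)}_0,c^{(n)}_1$ are trivially associative, and $\pr^{(n)}_1,\pr^{(n)}_n,\vee^{(n)},\wedge^{(n)},+^{(n)}$ are precisely the arity\hyp{}$n$ operations derived from the binary semigroups $\pr^{(2)}_1,\pr^{(2)}_2,\vee,\wedge,+$ of Table~\ref{table:semigroups}, hence associative by the remark in Section~\ref{sec:preliminaries}. For $\overline{+^{(n)}}(a_1,\dots,a_n)=1+\sum_k a_k$ over $\GF(2)$ a direct computation suffices: in any nesting the two added constants cancel and one is left with $\sum_{k=1}^{2n-1}a_k$, which is independent of the contraction position. (Alternatively, since every operation on $\{0,1\}=\GF(2)$ is automatically a multilinear polynomial function, the whole theorem follows by specializing Theorem~\ref{thm:gen-MarMat} to $R=\GF(2)$; but the aim here is a self\hyp{}contained elementary argument.)

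\textbf{Necessity.}
Let $f$ be associative. I would work with the reformulation of associativity through the derived operation $f_2$: for every word $z$ of length $2n-1$, contracting any length\hyp{}$n$ window of $z$ and applying $f$ to the resulting length\hyp{}$n$ word yields one and the same value, and comparing adjacent windows gives the basic relations. The classical eight semigroups settle the base case $n=2$, so assume $n\geq 3$ and split according to whether $f$ depends on some interior coordinate $j\in\{2,\dots,n-1\}$. If it does not, then $f(w)=\beta(w_1,w_n)$ for a binary operation $\beta$; contracting the first and the last window and equating shows $\beta$ is associative, hence one of the eight semigroups, while contracting an interior window (which exists because $n\geq 3$) and equating with the first gives in addition $\beta(\beta(x,y),z)=\beta(x,z)$ for all $x,y,z$. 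Running through the eight semigroups, only $c_0,c_1,\pr^{(2)}_1,\pr^{(2)}_2$ satisfy this extra identity, so $f\in\{c^{(n)}_0,c^{(n)}_1,\pr^{(n)}_1,\pr^{(n)}_n\}$.

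\textbf{Interior dependence.}
I claim that in the remaining case $f$ is symmetric, so that $f(w)$ depends only on the number of ones in $w$, say $f(w)=h(k)$ on words with $k$ ones for some $h\colon\{0,\dots,n\}\to\{0,1\}$. Granting this, the window\hyp{}sliding relation applied to a word of length $2n-1$ having two adjacent length\hyp{}$n$ windows of weights $s$ and $s+1$ yields
\[
h\bigl(r+h(s)\bigr)=h\bigl(r-1+h(s+1)\bigr)\qquad(0\leq s\leq n-1,\ 1\leq r\leq n-1),
\]
where $r$ is the weight of the complementary positions. A short analysis of this relation finishes the classification: if some two consecutive values $h(s),h(s+1)$ coincide, the relation forces $h$ to be constant on $\{0,\dots,n-1\}$ or on $\{1,\dots,n\}$, giving $\wedge^{(n)}$, $\vee^{(n)}$, or a constant; otherwise $h$ alternates and $f$ is $+^{(n)}$ or $\overline{+^{(n)}}$.

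\textbf{Main obstacle.}
The crux is establishing symmetry from interior dependence, since associativity by itself does not force symmetry (the projections are associative but not symmetric). I expect to prove it by a transposition argument: sliding a length\hyp{}$n$ window by one step past a single active symbol set against an otherwise constant background forces $f$ to be invariant under interchanging two adjacent arguments, and the assumed dependence on an interior coordinate lets this local symmetry propagate to all coordinates. Making this propagation precise, uniformly in $n$, is the part I anticipate will require the most care.
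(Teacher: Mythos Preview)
Your route is genuinely different from the paper's. The paper does not split on interior dependence at all; it performs a direct case analysis on the values $(0^n)$, $(10^{n-1})$, $(0^{n-1}1)$, $(110^{n-2})$, $(010^{n-2})$, $(01^{n-1})$, branching into a small tree and, in each leaf, deriving a local rewriting identity of the shape $(\vect{u}\,ab\,\vect{v}) = (\vect{u}\,a'b'\,\vect{v})$ (for instance $(\vect{u}10\vect{v}) = (\vect{u}00\vect{v})$) by inserting a known value such as $(0^n)$ or $(10^{n-1})$ via associativity. These rewriting rules collapse every input to a canonical one and identify $(\cdot)$ with one of the eight functions; one subcase ends in a contradiction. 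No global symmetry lemma is proved; symmetry appears only in two specific subcases, as a consequence of the rewriting rules available there.

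Your structural split is attractive: the ``no interior dependence'' case is handled cleanly and correctly (the reduction to a binary $\beta$ that is associative and satisfies $\beta(\beta(x,y),z)=\beta(x,z)$ does isolate exactly the constants and projections), and the post-symmetry analysis of the weight function $h$ is sound (the relation $h(r+h(s))=h(r-1+h(s+1))$ forces $h$ to alternate or to be constant on $\{0,\dots,n-1\}$ or $\{1,\dots,n\}$, yielding $+^{(n)}$, $\overline{+^{(n)}}$, $\wedge^{(n)}$, $\vee^{(n)}$, or a constant). The real gap is precisely where you flag it: deducing full symmetry from dependence on one interior coordinate. Sliding the window past a single active symbol on a constant background yields relations among the values of $f$ on weight-one inputs, but promoting these to the adjacent-transposition identity $f(\vect{u}\,ab\,\vect{v})=f(\vect{u}\,ba\,\vect{v})$ for \emph{all} $\vect{u},\vect{v}$ still needs work; the paper obtains that identity only after committing to specific values (e.g.\ $(0^n)=1$, or $(010^{n-2})=1$), and one branch of its tree terminates in a contradiction rather than in a function, so some case distinction looks hard to avoid. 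If you can carry the symmetry lemma through in one stroke, your argument will be more conceptual than the paper's; otherwise you may end up reproducing a comparable case analysis inside that lemma.
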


\begin{proof}
It is clear that the operations specified in the statement are associative, because each one is derived from a binary associative operation, with the exception of $\overline{+^{(n)}}$ for odd $n$.
(We have $\boxplus^{(n)} = \overline{+^{(n)}}$ for even $n$, and $\boxplus^{(n)} = +^{(n)}$ for odd $n$.)
It is easy to verify that also $\overline{+^{(n)}}$ is associative.

In order to show necessity, assume that $(\cdot) \colon \{0,1\}^n \to \{0,1\}$ is associative.
We consider several cases and subcases.

\renewcommand{\descriptionlabel}[1]{\hspace*{\labelsep}\textsc{#1:}}
\setlist{parsep=1ex, itemsep=1ex, leftmargin=0cm, labelindent=0cm}

\begin{description}
\item[Case 1]
$(0^n) = 0$.
\begin{description}
\item[Case 1.1]
$(1 0^{n-1}) = 0$.
Then for all $\vect{u} \vect{v} \in \{0,1\}^{n-2}$,
\[
(\vect{u} 1 0 \vect{v})
= (\vect{u} 1 (0^n) \vect{v})
= (\vect{u} (1 0^{n-1}) 0 \vect{v})
= (\vect{u} 0 0 \vect{v}).
\]

It follows that $(\cdot)$ is completely determined by its values at tuples of the form
$0^{n-k} 1^k$ with $0 \leq k \leq n$.
More precisely, if $\vect{a} = \vect{u} 0 1^k$ for some $\vect{u} \in \{0,1\}^{n-k-1}$, then $(\vect{a}) = (0^{n-k} 1^k)$, because the value of $(\cdot)$ does not change if we change any $1$ followed by $0$ to $0$.

\begin{description}
\item[Case 1.1.1]
$(0^{n-1} 1) = 0$.
Similarly as above, we obtain for all $\vect{u} \vect{v} \in \{0,1\}^{n-2}$,
\[
(\vect{u} 0 1 \vect{v})
= (\vect{u} (0^n) 1 \vect{v})
= (\vect{u} 0 (0^{n-1} 1) \vect{v})
= (\vect{u} 0 0 \vect{v}).
\]
Thus the value of $(\cdot)$ does not change if we change any $1$ preceded by $0$ to $0$;
in particular, $(0^{n-k} 1^k) = (0^{n-k+1} 1^{k-1})$ for $0 < k < n$.
Consequently, $(\vect{a}) = 0$ for all $\vect{a} \in \{0,1\}^n \setminus \{1^n\}$.
It remains to consider the value of $(\cdot)$ at $1^n$.

\begin{description}
\item[Case 1.1.1.1]
$(1^n) = 0$.
Then $(\cdot) = c^{(n)}_0$.

\item[Case 1.1.1.2]
$(1^n) = 1$.
Then $(\cdot) = \wedge^{(n)}$.
\end{description}

\item[Case 1.1.2]
$(0^{n-1} 1) = 1$.
Then for all $\vect{u} \vect{v} \in \{0,1\}^{n-2}$,
\[
(\vect{u} 0 1 \vect{v})
= (\vect{u} (1 0^{n-1}) 1 \vect{v})
= (\vect{u} 1 (0^{n-1} 1) \vect{v})
= (\vect{u} 1 1 \vect{v}).
\]
Thus
$(0^{n-k} 1^k) = (0^{n-k-1} 1^{k+1})$ for $0 < k < n$,
and we have $(\cdot) = \pr^{(n)}_n$.
\end{description}

\item[Case 1.2]
$(1 0^{n-1}) = 1$.

\begin{description}
\item[Case 1.2.1]
$(1 1 0^{n-2}) = 0$.
Then for all $\vect{u} \vect{v} \in \{0,1\}^{n-2}$,
\[
(\vect{u} 0 0 \vect{v})
= (\vect{u} (1 1 0^{n-2}) 0 \vect{v})
= (\vect{u} 1 (1 0^{n-1}) \vect{v})
= (\vect{u} 1 1 \vect{v}).
\]

\begin{description}
\item[Case 1.2.1.1]
$(0 1 0^{n-2}) = 0$.
Then for all $\vect{u} \vect{v} \in \{0,1\}^{n-2}$,
\[
(\vect{u} 0 0 \vect{v})
= (\vect{u} (0 1 0^{n-2}) 0 \vect{v})
= (\vect{u} 0 (1 0^{n-1}) \vect{v})
= (\vect{u} 0 1 \vect{v}).
\]
Applying the above identities, we obtain
\[
0
= (0 0 0 0^{n-3})
= (0 0 1 0^{n-3})
= (1 1 1 0^{n-3})
= (1 0 0 0^{n-3})
= 1,
\]
a contradiction.
Thus, this case is not possible.

\item[Case 1.2.1.2]
$(0 1 0^{n-2}) = 1$.
Then for all $\vect{u} \vect{v} \in \{0,1\}^{n-2}$,
\[
(\vect{u} 0 1 \vect{v})
= (\vect{u} 0 (1 0^{n-1}) \vect{v})
= (\vect{u} (0 1 0^{n-2}) 0 \vect{v})
= (\vect{u} 1 0 \vect{v}).
\]
This means that $(\cdot)$ is symmetric, and the value of $(\cdot)$ at $\vect{a}$ depends only on the number of $1$'s in $\vect{a}$.
Together with the identity $(\vect{u} 0 0 \vect{v}) = (\vect{u} 1 1 \vect{v})$ established above, this implies that $(\vect{a})$ depends only on the parity of the number of $1$'s in $\vect{a}$.
Since $(0^n) = 0$ and $(1 0^{n-1}) = 1$, we have
$(\vect{a}) = 0$ if and only if the number of $1$'s in $\vect{a}$ is even,
in other words, $(\cdot) = +^{(n)}$.
\end{description}

\item[Case 1.2.2]
$(1 1 0^{n-2}) = 1$.
Then for all $\vect{u} \vect{v} \in \{0,1\}^{n-2}$,
\[
(\vect{u} 1 0 \vect{v})
= (\vect{u} (1 1 0^{n-2}) 0 \vect{v})
= (\vect{u} 1 (1 0^{n-1}) \vect{v})
= (\vect{u} 1 1 \vect{v}).
\]
It follows that $(\cdot)$ is completely determined by its values at tuples of the form
$0^k 1^{n-k}$ with $0 \leq k \leq n$.
More precisely, if $\vect{a} = 0^k 1 \vect{u}$ for some $\vect{u} \in \{0,1\}^{n-k-1}$, then $(\vect{a}) = (0^k 1^{n-k})$, because the value of $(\cdot)$ does not change if we change any $0$ preceded by $1$ to $1$.
In particular, for any $\vect{u} \in \{0,1\}^{n-1}$, we have $(1 \vect{u}) = (1^n) = (1 0^{n-1}) = 1$.

\begin{description}
\item[Case 1.2.2.1]
$(0 1^{n-1}) = 0$.
Then for all $\vect{u} \vect{v} \in \{0,1\}^{n-2}$,
\[
(\vect{u} 0 1 \vect{v})
= (\vect{u} 0 (1^{n-1} 0) \vect{v})
= (\vect{u} (0 1^{n-1}) 0 \vect{v})
= (\vect{u} 0 0 \vect{v}).
\]
Thus $(0^k 1^{n-k}) = (0^{k+1} 1^{n-k-1})$ for $0 < k < n$.
Consequently, $(0^k 1^{n-k}) = (0^n) = 0$ for $0 < k \leq n$.
Therefore $(\cdot) = \pr^{(n)}_1$.

\item[Case 1.2.2.2]
$(0 1^{n-1}) = 1$.
Then for all $\vect{u} \vect{v} \in \{0,1\}^{n-2}$,
\[
(\vect{u} 0 1 \vect{v})
= (\vect{u} 0 (1^n) \vect{v})
= (\vect{u} (0 1^{n-1}) 1 \vect{v})
= (\vect{u} 1 1 \vect{v}).
\]
Thus $(0^k 1^{n-k}) = (0^{k-1} 1^{n-k+1})$ for $0 < k < n$.
Consequently, $(0^k 1^{n-k}) = (1^n) = 1$ for $0 \leq k < n$.
Therefore $(\vect{a}) = 0$ if and only if $\vect{a} = 0^n$,
that is, $(\cdot) = \vee^{(n)}$.
\end{description}
\end{description}
\end{description}

\item[Case 2]
$(0^n) = 1$.
Then for all $\vect{u} \vect{v} \in \{0,1\}^{n-2}$,
\[
(\vect{u} 0 1 \vect{v})
= (\vect{u} 0 (0^n) \vect{v})
= (\vect{u} (0^n) 0 \vect{v})
= (\vect{u} 1 0 \vect{v}).
\]
Consequently, $(\cdot)$ is symmetric, and the value of $(\cdot)$ at $\vect{a}$ depends only on the number of $1$'s in $\vect{a}$.

\begin{description}
\item[Case 2.1]
$(1 0^{n-1}) = 0$.
Then for all $\vect{u} \vect{v} \in \{0,1\}^{n-2}$,
\[
(\vect{u} 0 0 \vect{v})
= (\vect{u} (1 0^{n-1}) 0 \vect{v})
= (\vect{u} 1 (0^n) \vect{v})
= (\vect{u} 1 1 \vect{v}).
\]
Similarly as in Case 1.2.1.2 we conclude that the value of $(\cdot)$ at $\vect{a}$ depends only on the parity of the number of $1$'s in $\vect{a}$.
Since $(0^n) = 1$ and $(1 0^{n-1}) = 0$, we have
$(\vect{a}) = 0$ if and only if the number of $1$'s in $\vect{a}$ is odd,
in other words, $(\cdot) = \overline{+^{(n)}}$.
\item[Case 2.2]
$(1 0^{n-1}) = 1$.
Then for all $\vect{u} \vect{v} \in \{0,1\}^{n-2}$,
\[
(\vect{u} 1 0 \vect{v})
= (\vect{u} (1 0^{n-1}) 0 \vect{v})
= (\vect{u} 1 (0^n) \vect{v})
= (\vect{u} 1 1 \vect{v}).
\]
Thus $(1^k 0^{n-k}) = (1^{k+1} 0^{n-k-1})$ for $0 < k < n$.
Since $(1 0^{n-1}) = 1$ and $(0^n) = 1$, it follows that $(\cdot) = c^{(n)}_1$.
\qedhere
\end{description}
\end{description}
\end{proof}

\begin{remark}
For $n \geq 2$, the only $n$\hyp{}ary associative operation on $\{0,1\}$ that is not derivable from a binary associative operation is $\overline{+^{(n)}}$ for odd $n$.
\end{remark}

\begin{proposition}
For $n \geq 1$, the only primitive $n$\hyp{}ary associative operations on $\{0,1\}$ are the unary and binary ones and $\overline{+^{(n)}}$ for $n = 2^k + 1$, $k \in \IN$.
\end{proposition}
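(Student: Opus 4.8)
The plan is to combine the complete classification in Theorem~\ref{thm:main} with an explicit computation of the operations derived from each of the eight associative types. First I would dispose of the small arities by a pure arity count. A unary operation cannot be derived from any operation of strictly smaller arity, so it is primitive. Moreover, every operation derived from a unary operation has arity $\ell(1-1)+1 = 1$, so no binary operation is derivable from a unary one; hence every binary operation is primitive as well. This settles $n = 1$ and $n = 2$, and in particular the case $\overline{+^{(2)}} = \boxplus$ corresponding to $k = 0$.

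For $n \geq 3$ I would invoke Theorem~\ref{thm:main} and go through the eight types. A short induction on $\ell$ shows that deriving any one of $c^{(m)}_0$, $c^{(m)}_1$, $\pr^{(m)}_1$, $\pr^{(m)}_m$, $\vee^{(m)}$, $\wedge^{(m)}$, $+^{(m)}$ stays inside its own family, producing the $N(\ell)$-ary operation of the same name. In particular each of $c^{(n)}_0$, $c^{(n)}_1$, $\pr^{(n)}_1$, $\pr^{(n)}_n$, $\vee^{(n)}$, $\wedge^{(n)}$, $+^{(n)}$ is derived from its binary version, which has arity $2 < n$, so none of these seven operations is primitive.

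The entire content therefore lies in the behaviour of $\overline{+^{(n)}}$. Here I would prove by induction on $\ell$ the formula
\[
(\overline{+^{(m)}})_\ell(a_1, \dots, a_{N(\ell)}) = \ell + \sum_{i=1}^{N(\ell)} a_i \pmod 2,
\]
so that deriving $\overline{+^{(m)}}$ yields $\overline{+^{(N(\ell))}}$ exactly when $\ell$ is odd and $+^{(N(\ell))}$ when $\ell$ is even, whereas deriving $+^{(m)}$ only ever yields $+$. Together with the previous paragraph, this shows that the only way to obtain $\overline{+^{(n)}}$ as a derived operation is as $(\overline{+^{(m)}})_\ell$ with $\ell$ odd; for this to witness non-primitivity we additionally need $m < n$, i.e. $\ell \geq 2$, and hence $\ell \geq 3$. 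Writing $n - 1 = \ell(m-1)$, such $m, \ell$ exist precisely when $n - 1$ admits an odd divisor $\ell \geq 3$.

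It then remains to translate this divisibility condition: $n - 1$ has an odd divisor greater than $1$ if and only if $n - 1$ is not a power of $2$. Thus $\overline{+^{(n)}}$ is primitive if and only if $n - 1 = 2^k$, that is $n = 2^k + 1$, which together with the unary and binary cases gives the claim. I expect the main obstacle to be the careful bookkeeping that rules out every source operation other than $\overline{+^{(m)}}$: one must confirm that the derivations of the other seven types never coincide with $\overline{+^{(n)}}$ (for instance by evaluating at $0^n$ and $1^n$), so that the number-theoretic characterization is exhaustive and not merely sufficient.
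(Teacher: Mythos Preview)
Your proposal is correct and follows essentially the same route as the paper: dispose of the seven ``standard'' families by deriving them from their binary versions, compute $(\overline{+^{(m)}})_\ell$ explicitly, and reduce primitivity of $\overline{+^{(n)}}$ to the absence of an odd divisor $\ell\geq 3$ of $n-1$. Your write-up is in fact slightly more careful than the paper's, since you spell out the cases $n\leq 2$ and explicitly flag the (easy but necessary) check that none of the other seven families can ever derive $\overline{+^{(n)}}$, which the paper leaves implicit.
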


\begin{proof}
For each binary semigroup operation $\circ$ (see Table~\ref{table:semigroups}) and for each $n \geq 3$, the $n$\hyp{}ary associative operation $\circ^{(n)}$ is obviously derivable from $\circ$ and is hence not primitive.
It remains to consider operations of the form $\overline{+^{(n)}}$.
The operations derivable from $\overline{+^{(m)}}$ are, for $\ell \in \IN$,
\[
(\overline{+^{(m)}})_\ell^{(\ell(m-1) + 1)} =
\begin{cases}
\overline{+^{(\ell(m-1) + 1)}}, & \text{if $\ell$ is odd,} \\
+^{(\ell(m-1) + 1)}, & \text{if $\ell$ is even.}
\end{cases}
\]
It follows that, for $n \geq 2$, $\overline{+^{(n)}}$ is primitive if and only if $n$ is not of the form $\ell(m-1) + 1$ for any odd $\ell > 1$ and for any $m > 1$.
This is equivalent to $n = 2^k + 1$ for some $k \in \IN$.
\qedhere
\end{proof}

\begin{remark}
The solution to problem 7 in the 2018 Mikl\'os Schweitzer competition \cite{MS}
reveals that
the self\hyp{}commuting Boolean functions
are the same as the associative ones with fictitious arguments introduced.
\end{remark}


\section*{Acknowledgments}

The authors would like to thank Robert Baumann, Thomas Quinn\hyp{}Gregson, and Nikolaas Verhulst for inspiring discussions
and the anonymous reviewer for helpful comments.



\begin{thebibliography}{9}
\bibitem{Andres}
    Andres, S.D.:
    Classification of all associative mono\hyp{}$n$\hyp{}ary algebras with 2 elements.
    Int.\ J. Math.\ Math.\ Sci.\ 2009, Art.\ ID 678987, 16 pp.\ (2009)

\bibitem{MarMat-2011}
    Marichal, J.-L.,  Mathonet, P.:
    A description of $n$\hyp{}ary semigroups polynomial\hyp{}derived from integral domains.
    Semigroup Forum \textbf{83}, 241--249 (2011)

\bibitem{MS}
``2018 Mikl\'os Schweitzer'',
Art of Problem Solving portal,
accessed June 11, 2019,
\newline
\href{https://artofproblemsolving.com/community/c771105_2018_mikloacutes_schweitzer}{https://artofproblemsolving.com/community/c771105\_2018\_mikloacutes\_schweitzer}

\end{thebibliography}
\end{document}